\newtheorem{thm}{Theorem}
\newtheorem{lem}[thm]{Lemma}
\newtheorem{cor}[thm]{Corollary}
\newtheorem{conj}[thm]{Conjecture}
\renewcommand{\le}{\leqslant}
\renewcommand{\ge}{\geqslant}
\def\st{\,\vert\ }
\def\S{{\mathcal S}''}
\def\Se{{\mathcal S}'}
\def\eps{\varepsilon}
\begin{document}

\title{On the Set of Circular Total Chromatic Numbers of Graphs}
\author{Mohammad Ghebleh\\[2mm]
Department of Mathematics\\Kuwait University, State of Kuwait\\{\tt mamad@sci.kuniv.edu.kw}}
\date{}

\maketitle

\begin{abstract}
For every integer $r\ge3$ and every $\eps>0$ we construct a graph with maximum degree $r-1$
whose circular total chromatic number is in the interval $(r,r+\eps)$.
This proves that (i) every integer $r\ge3$ is an accumulation point of the set of circular total
chromatic numbers of graphs, and (ii) for every $\Delta\ge2$,
the set of circular total chromatic numbers of graphs with maximum degree~$\Delta$ is infinite.
All these results hold for the set of circular total chromatic numbers of bipartite graphs as well.
\end{abstract}

\section{Introduction}

Total colouring has attracted a considerable amount of attention since the infamous 
Total Colouring Conjecture (TCC) was proposed by Behzad~\cite{Behzad} and also by Vizing~\cite{VizingTCC}.
The TCC asks whether every graph with maximum degree $\Delta$ admits a $\Delta+2$--total colouring, namely
an assignment of one of $\Delta+2$ colours to each vertex and to each edge of the graph
so that there is no monochromatic pair of adjacent vertices, pair of adjacent edges, or
pair of incident vertex and edge.

We study the circular version of total colouring in this paper. Since a total colouring of a
graph $G$ is in turn a vertex colouring of the total graph of $G$, all basic properties of
circular colourings carry over to circular total colouring. Circular total colourings have been
studied in only a few articles so far, for example~\cite{BodeKemnitzKlages,HackmannKemnitz,HackmannKemnitz:circulant}.
For an introduction to circular colouring, as well as a survey of results we refer the reader
to~\cite{Zhu-survey} and~\cite{zhu:newsurvey}.

In this work we study the set $\S=\left\{\chi''_c(G)\st G\text{ is a graph}\right\}$, where
$\chi''_c(G)$ denotes the circular total chromatic number of the graph $G$.
It is an easy observation to see that $\S\cap(-\infty,3)=\{1,2\}$.
In terms of other known values of $\chi''_c(G)$, they are computed precisely for all graphs with maximum degree at most~$2$,
as well as graphs of order at most $7$, complete graphs, complete bipartite graphs, and some subcubic cases.

In particular it is proved in~\cite{HackmannKemnitz:circulant} that the M\"obius ladders $V_{2n}$ have
circular total chromatic number at most $4+\frac{1}{2}$ when $n\ge 4$, and that the prism
$K_2\boxempty C_5$ has circular total chromatic number at most $4+\frac{1}{3}$.
We verified by the aid of a computer program that indeed $\chi''_c(K_2\boxempty C_5)=4+\frac{1}{3}$
and $\chi''_c(V_{2n})=4+\frac{1}{2}$ for $n=4,5,6$.

In~\cite{BodeKemnitzKlages} an infinite family of graphs with circular total chromatic number at most
$4+\frac{1}{3}$ and a graph with circular total chromatic number at most $4+\frac{1}{4}$ are given.
While it follows from the results of~\cite{HackmannKemnitz} that the set of circular total chromatic numbers
of graphs with maximum degree~$2$ is precisely the set
$$\left\{3\right\}\cup\left\{3+{1}/{n}\st n\text{ is a positive integer}\right\},$$
the only non-integer numbers $a>4$ realized as the circular total chromatic number of some graph
are $4+\frac{1}{3}$, $4+\frac{1}{2}$, $5+\frac{1}{2}$ and $6+\frac{1}{2}$ (see~\cite{HackmannKemnitz:circulant}
and the above discussion).
In particular, if $k\ge3$, it is not known yet whether the set $\S_k=\left\{\chi''_c(G)\st\Delta(G)=k\right\}$ is infinite.

In this work we prove that every integer $r\ge 3$ is an 
accumulation point of~$\S_{r-1}$ and hence of $\S$. Thus $\S_k$ is infinite when $k\ge 2$. This work was partly motivated by the
following conjecture of Xuding Zhu for circular edge colouring:

\begin{conj}{\rm\cite{Zhu:accumulation}}
Integers $r\ge3$ are the only accumulation points of the set
$\Se=\left\{\chi'_c(G)\st G\text{ is a graph}\right\}$.
\label{conj:zhu}
\end{conj}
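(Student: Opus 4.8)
The plan is to treat the two assertions in Conjecture~\ref{conj:zhu} separately, after first passing to line graphs. Since $\chi'_c(G)=\chi_c(L(G))$, the general identity $\lceil\chi_c(H)\rceil=\chi(H)$ gives $\lceil\chi'_c(G)\rceil=\chi'(G)$, and Vizing's theorem forces $\chi'(G)\in\{\Delta(G),\Delta(G)+1\}$. Hence every value in $\Se$ lies in a half-open interval $(k-1,k]$ with $k=\chi'(G)$, and it suffices to understand, for each integer $r\ge3$, which values occur in a punctured neighbourhood of $r$: the ``class~$1$ side'' $(r-1,r)$, realised by graphs with $\chi'=r$, and the ``class~$2$ side'' $(r,r+1)$, realised by graphs with $\chi'=r+1$.

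For the first assertion (every integer $r\ge3$ \emph{is} an accumulation point) I would produce explicit families. To accumulate at $r$ from above I would imitate the construction used in this paper for $\chi''_c$: build a graph of maximum degree $r$ that is class~$2$ (so $\chi'=r+1$, and values land in $(r,r+1]$) but contains a single odd-cycle-type ``fractional obstruction'' whose effect on the optimal circular edge colouring scales like $1/k$, so that chaining $k$ copies drives $\chi'_c$ into $(r,r+\eps)$. This generalises the elementary fact $\chi'_c(C_{2k+1})=2+1/k\to2$ and the prism and M\"obius-ladder examples cited above. To accumulate at $r$ from below I would dually force $\chi'=r$ together with a defect pushing the optimal circular colouring arbitrarily close to, but strictly below, $r$.

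The second assertion (only integers accumulate) is where I expect the real difficulty, and it is equivalent to the statement that every closed interval $I$ containing no integer meets $\Se$ in a \emph{finite} set. Writing $\chi'_c(G)=p/q$ in lowest terms, this would follow at once from a \emph{denominator bound}: if $p/q$ lies at distance at least $\delta$ from the nearest integer, then $q\le f(\delta)$ for some function $f$ independent of $G$, since a bounded interval contains only finitely many rationals of bounded denominator.

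The main obstacle is proving such a denominator bound, and here the general theory does not suffice: the set of \emph{all} circular chromatic numbers is dense in $[2,\infty)$ with unbounded denominators, so any argument must genuinely exploit that $H=L(G)$ is a line graph — in particular that $\chi(H)=\chi'(G)$ is governed by the small parameter $\Delta(G)$ through Vizing's theorem rather than by $\Delta(H)$. Concretely I would combine the tight-cycle structure theory of circular colourings (an optimal $(p,q)$-colouring of $H$ forces a closed ``tight'' walk whose length controls $q$) with the adjacency structure of line graphs, translating a long tight walk back into forbidden edge configurations in $G$ and bounding its length in terms of the distance of $p/q$ from $\chi'(G)$. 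The fractional chromatic index $\chi'_f$, together with the Goldberg--Seymour theorem relating $\chi'$, $\Delta$ and $\chi'_f$, is the natural extra tool for pinning the non-integer values of $\chi'_c$ near the integers; making this quantitative is, in my view, the crux of the whole conjecture.
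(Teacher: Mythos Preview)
The statement you are addressing is Conjecture~\ref{conj:zhu}, not a theorem: the paper does not prove it and offers no argument to compare against. It is quoted as an open problem of Zhu that motivated the paper's analogous results for circular \emph{total} chromatic numbers. So there is no ``paper's own proof'' here, and your proposal has to stand on its own as an attempted resolution of an open conjecture.

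On those merits, what you have written is a research outline, not a proof. For the first assertion (every integer $r\ge3$ is an accumulation point of $\Se$) your sketch is in the right spirit, but you still owe the actual constructions and verifications; note too that your own example $\chi'_c(C_{2k+1})=2+1/k\to2$ exhibits $2$ as an accumulation point, which sits awkwardly with the stated range $r\ge3$ and should prompt you to revisit what exactly is being claimed. The second assertion (that \emph{only} integers accumulate) is the real content of the conjecture, and here you concede the gap yourself: a denominator bound of the type you describe would indeed suffice, but you provide no argument for one, only a list of ingredients (tight-cycle structure of optimal circular colourings, line-graph constraints, Goldberg--Seymour) together with the candid admission that ``making this quantitative is, in my view, the crux of the whole conjecture.'' That is an honest appraisal of where the difficulty lies, but it means your submission is a plan of attack on an open problem rather than a proof, and it cannot be accepted as such.
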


We conjecture that the same holds for circular total colouring.

\section{The Main Result}

For convenience, we allow half-edges in graphs. A half-edge has only one end vertex.
We may join two half-edges to form an edge joining their end vertices.
We use the notion of $(p,q)$--colourings for our upper bounds. A $(p,q)$--colouring of a graph $G$
is any function $c:V(G)\to\{0,1,\ldots,p-1\}$ such that for any pair $x,y$ of adjacent vertices
we have $q\le|c(x)-c(y)|\le p-q$. A $(p,q)$--total colouring of a graph is a $(p,q)$--colouring
of its total graph. The circular total chromatic number of a graph $G$ is defined by
\[\chi''_c(G)=\inf\left\{\frac{p}{q}\st G\text{ admits a }(p,q)\text{--total colouring}\right\}.\]
Therefore existence of a $(p,q)$--total colouring for $G$ implies $\chi''_c(G)\le p/q$.

Let $H_k$ be the graph obtained from the complete bipartite graph $K_{r,r}$ by deleting one vertex, but leaving its
incident edges in as half-edges.
The following lemma is the main ingredient in our construction.

\begin{lem}
The graph $H_k$ is type~$1$. Moreover, in every $k+1$--total colouring of $H_k$,
all the half-edges receive the same colour.
\label{lem:all0}
\end{lem}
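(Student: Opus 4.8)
The plan is to encode a $(k+1)$--total colouring of $H_k$ as a colour array of the underlying $K_{k,k}$. Write the two parts of $K_{k,k}$ as $\{a_1,\dots,a_k\}$ and $\{b_1,\dots,b_k\}$, with $a_1$ the deleted vertex, and record the colours in a $k\times k$ array $M$ whose $(i,j)$ entry is the colour of the edge $a_ib_j$; here row $1$ consists of the $k$ half-edges (the former edges at $a_1$). Every vertex of $H_k$ has degree $k$, so at each vertex the vertex colour together with its $k$ incident edge/half-edge colours exhaust all $k+1$ colours. This gives the constraints: rows $2,\dots,k$ and all $k$ columns have pairwise distinct entries, while $\phi(a_i)$ is the unique colour missing from row $i$ (for $i\ge 2$) and $\phi(b_j)$ the unique colour missing from column $j$. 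The asymmetry I would exploit is that row $1$ need not have distinct entries, since distinct half-edges sit at non-adjacent vertices $b_j,b_{j'}$.

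The key step is a double count. For a colour $\alpha$ let $n_1(\alpha)$ be the number of half-edges coloured $\alpha$, and set $a(\alpha)=|\{i\ge2:\phi(a_i)=\alpha\}|$ and $b(\alpha)=|\{j:\phi(b_j)=\alpha\}|$. Counting the cells coloured $\alpha$ by columns gives $k-b(\alpha)$, and by rows gives $n_1(\alpha)+(k-1)-a(\alpha)$; hence $n_1(\alpha)=1-b(\alpha)+a(\alpha)$. Adjacency of $a_i$ and $b_j$ forces $\phi(a_i)\neq\phi(b_j)$, so no colour is simultaneously a row-missing and a column-missing colour, i.e.\ $a(\alpha)b(\alpha)=0$ for every $\alpha$. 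For any colour $\alpha$ missing from some column we then have $a(\alpha)=0$ and $n_1(\alpha)=1-b(\alpha)\ge0$, forcing $b(\alpha)=1$. Summing over colours and using $\sum_\alpha b(\alpha)=k$ shows that exactly $k$ colours are column-missing, each in a single column, so the $k$ columns are missing $k$ distinct colours and a unique colour $\beta$ is present in every column. Disjointness then forces $\phi(a_i)=\beta$ for all $i\ge2$, whence $a(\beta)=k-1$, $b(\beta)=0$, and $n_1(\beta)=k$; that is, all $k$ half-edges carry the colour $\beta$, which is exactly the claim.

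It remains to prove that $H_k$ is type~$1$, for which I would exhibit one $(k+1)$--total colouring explicitly. Take colours $\{0,1,\dots,k\}$, colour every half-edge and every vertex $a_i$ with $0$, and set $\phi(b_j)=j$. For the edges $a_ib_j$ ($i\ge2$) I would use the cyclic Latin square $L_{ij}=((i+j-2)\bmod k)+1$ of order $k$ on symbols $\{1,\dots,k\}$, whose first row is the identity $(1,2,\dots,k)$; deleting this first row leaves a $(k-1)\times k$ array in which column $j$ is exactly $\{1,\dots,k\}\setminus\{j\}$, so together with the half-edge colour $0$ every column misses precisely $j$ and every row misses $0$. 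One checks this satisfies all total-colouring constraints, including $\phi(a_i)=0\neq j=\phi(b_j)$. The main obstacle is the counting argument of the middle paragraph: the conclusion hinges on the single inequality $n_1(\alpha)\ge0$ interacting with the disjointness of row- and column-missing colours, and it is precisely the freedom of row $1$ (half-edges may repeat) that both makes $H_k$ type~$1$ and, somewhat paradoxically, pins every half-edge to the same colour.
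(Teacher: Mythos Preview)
Your argument is correct. Both the explicit colouring for the type~$1$ claim and the double count for the uniqueness claim go through as written; in particular the identity $n_1(\alpha)=1-b(\alpha)+a(\alpha)$ together with the disjointness $a(\alpha)b(\alpha)=0$ forces $b(\alpha)\le 1$ for every $\alpha$, and from $\sum_\alpha b(\alpha)=k$ you correctly isolate the unique colour $\beta$ with $b(\beta)=0$ and then recover $a(\beta)=k-1$, $n_1(\beta)=k$.

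The paper's proof, however, is rather different in spirit. For the second part it does not set up a global count at all: it fixes one vertex on the full side, say with colour~$1$, observes that every vertex on the short side (having degree~$k$) must see colour~$1$ on an incident edge, and hence the colour-$1$ edges form a matching saturating the short side and therefore all but that one vertex of the full side. This forces the full-side vertices to receive pairwise distinct colours $1,\dots,k$, leaving only colour~$0$ for every short-side vertex and, as the unique colour missing at each full-side vertex, for every half-edge. So the paper traces a single colour through a matching argument, whereas you aggregate all colours simultaneously via a counting identity. The paper's route is a bit shorter and pinpoints the structural reason (the colour-$1$ matching); your route is more systematic and would generalise more readily to variants where one wants to track how many half-edges can receive a given colour. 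For the type~$1$ part the two proofs are essentially the same: both build the colouring from a Latin square of order~$k$ on symbols $1,\dots,k$, assigning colour~$0$ to the half-edges and to the short-side vertices.
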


\begin{proof}
Let $X=\{x_1,x_2,\ldots,x_k\}$ and $Y=\{y_2,\ldots,y_k\}$ be the partite sets of~$H_k$,
and let $e_i$ be the half-edge in $H_k$ incident with $x_i$. Throughout this proof $i$ and $j$ are
integers with $1\le i\le k$ and $2\le j\le k$. 
Let $L=[\ell_{ij}]$ be a Latin square of order~$k$ with entries from the set $\{1,\ldots,k\}$.
We define a $k+1$--total colouring $c$ of $H_k$ as follows: $c(x_iy_j)=\ell_{ij}$,
$c(x_i)=\ell_{i1}$, and $c(e_i)=c(y_j)=0$. The fact that $c$ is a proper total colouring of $H_k$ follows
immediately since $L$ is a Latin square.

For the second part of the lemma,
let $c$ be a $k+1$--total colouring of $H_k$ with the colours $0,1,\ldots,k$.
We may assume that $c(x_1)=1$, thus $c(y_j)\not=1$ for all~$j$.
On the other hand, since each $y_j$ has $k$ incident edges, all colours must appear
either on $y_j$ itself or on an edge incident with it. In particular each $y_j$ is incident with an edge
coloured~$1$. Therefore the set of edges coloured $1$ is a matching which saturates $Y$, hence it
also saturates $X\setminus\{x_1\}$. This means that $x_1$ is the only vertex coloured~$1$. Subsequently
the vertices in $X$ must all receive different colours, say $c(x_i)=i$ for $1\le i\le k$, which leaves only
the colour~$0$ for the vertices $y_j$. That is $c(y_j)=0$ for all~$j$.
Now the only missing colour at any $x_i$ is the colour $0$, so $c(e_i)=0$.
\end{proof}

The next lemma is a circular relaxation of Lemma~\ref{lem:all0}.

\begin{lem}
\def\c{\gamma}
Let $k\ge 2$. Let $e$ and $e'$ be two half-edges of the graph $H_k$, and let $x$ and $x'$ be the
vertices of $H_k$ incident with $e$ and $e'$ respectively.
Then for every positive integer $n$, $H_k$ admits an $(n(k+1)+1,n)$--total colouring $\c$
such that $\c(e)=0$, $\c(e')=1$, $\c(x)=n+1$, and $\c(x')=nk+1$.
\label{lem:tweak}
\end{lem}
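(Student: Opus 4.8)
The plan is to start from the integer-colouring of Lemma~\ref{lem:all0} and perturb it in a controlled way to produce the required circular $(n(k+1)+1,n)$--total colouring. The key observation is that an $(n(k+1)+1,n)$--total colouring is essentially an integer $(k+1)$--total colouring ``blown up'' by a factor of $n$, with one extra colour of slack. Concretely, given the proper integer total colouring $c$ from the first part of Lemma~\ref{lem:all0}, where colours live in $\{0,1,\ldots,k\}$, I would first scale to obtain the map $x\mapsto n\cdot c(x)$ into $\{0,n,2n,\ldots,nk\}\subseteq\mathbb Z_{n(k+1)+1}$. Since adjacent total-graph elements receive distinct integer colours under $c$, their scaled images differ by at least $n$; and because the modulus is $n(k+1)+1$ rather than $n(k+1)$, two elements coloured $0$ and $k$ under $c$ now sit at circular distance $n+1\ge n$ rather than exactly $n$. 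Thus the scaled colouring is automatically a valid $(n(k+1)+1,n)$--total colouring, and it gives me one extra unit of room around the cycle to redistribute colours.

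\smallskip

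Next I would arrange for the prescribed boundary values. Lemma~\ref{lem:all0} already guarantees that in the scaled colouring every half-edge receives colour $0$, so $\gamma(e)=0$ comes for free. To force $\gamma(e')=1$, $\gamma(x)=n+1$, and $\gamma(x')=nk+1$, I would exploit the freedom in choosing the underlying Latin square together with the single slack colour. By symmetry of $H_k$ I may assume $x$ and $x'$ are two specified vertices of the partite set $X=\{x_1,\ldots,x_k\}$; choosing an appropriate Latin square $L$ lets me prescribe which vertex of $X$ receives which colour, so I can set up $c$ so that $x$ gets colour-class $1$ (scaling to $n$) and $x'$ gets colour-class $k$ (scaling to $nk$). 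The target values $n+1$, $nk+1$, and $1$ are each exactly one more than $n$, $nk$, and $0$; this is where the extra slack colour is consumed. I would check that shifting precisely these few elements up by $1$ keeps all circular distances at least $n$, using that their neighbours were previously separated from them by the full gap of $n$ (or $n+1$) inherited from the integer colouring.

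\smallskip

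The main obstacle, and the step deserving the most care, is verifying that the simultaneous $+1$ adjustments at $e'$, $x$, and $x'$ do not create any conflict among themselves or with their many neighbours in the dense graph $H_k$. Each of $x$ and $x'$ has $k-1$ edges to $Y=\{y_2,\ldots,y_k\}$ plus its half-edge, and these edges carry the scaled Latin-square colours; I must confirm that bumping $\gamma(x)$ from $n$ to $n+1$ and $\gamma(x')$ from $nk$ to $nk+1$ still leaves circular distance at least $n$ to each incident edge-colour and to the half-edge colour $0$. The cleanest way to handle this is to track, for each of the three perturbed elements, the exact set of colours appearing on its neighbours in the scaled colouring, and observe that all those neighbour colours are multiples of $n$; since the perturbed value lands strictly between two consecutive multiples of $n$ (or just past $0$), its circular distance to every multiple of $n$ that is not its own class is at least $n$, while the gap created by the modulus being $n(k+1)+1$ absorbs the residual unit. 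Once this bookkeeping is in place the remaining verifications are routine, so I expect the argument to reduce to a short case-check organised around these three distinguished elements.
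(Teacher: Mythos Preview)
Your overall strategy---scale the integer colouring of Lemma~\ref{lem:all0} by $n$ and then use the single unit of slack in $\mathbb{Z}_{n(k+1)+1}$ to adjust a few values---is exactly the paper's, but the specific perturbation you propose does not work. After scaling you have $x$ at colour $n$, and since the row of the Latin square through $x$ is a permutation of $\{1,\ldots,k\}$ with $c(x)=1$, one of the edges $xy_j$ necessarily carries colour $2$, hence $2n$ after scaling. Bumping $x$ to $n+1$ then puts it at circular distance $|2n-(n+1)|=n-1<n$ from that edge, a genuine conflict. Your claim that a value ``strictly between two consecutive multiples of $n$'' is at circular distance at least $n$ from every other multiple of $n$ is simply false: $n+1$ is at distance $n-1$ from $2n$. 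The same objection does not hit $x'$ (its nearest neighbour colours are $(k-1)n$ below and $0$ above, both safe after the bump), but the failure at $x$ already breaks the argument, and there is no choice of Latin square that avoids placing some edge incident to $x$ at colour $2n$.

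The paper's fix is both simpler and avoids this trap: instead of moving three elements up by $1$, move a \emph{single} element down by $1$, namely recolour the half-edge $e$ from $0$ to $n(k+1)\equiv -1$, and then shift the entire colouring by $+1$. The global shift is distance-preserving, so only one local check is needed: that $e$ at colour $-1$ is still at circular distance $\ge n$ from $x$ (at $n$) and from each edge $xy_j$ (at some $n\ell$ with $2\le\ell\le k$), which is immediate. After the shift one gets $\gamma(e)=0$, $\gamma(e')=0+1=1$, $\gamma(x)=n+1$, $\gamma(x')=nk+1$ as required. The point you missed is that the one unit of slack sits between the colours $nk$ and $0$ (equivalently $-1$ and $0$), so it can be spent by pushing a $0$-coloured element to $-1$, but not by pushing an $n$-coloured element to $n+1$.
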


\begin{proof}
\def\c{\gamma}
Let $\{x_1,x_2,\ldots,x_k\}$ and $\{y_2,\ldots,y_{k}\}$ be the partite sets of~$H_k$,
and let $e_i$ be the half-edge in $H_k$ incident with $x_i$.
We may assume $e=e_1$ and $e'=e_2$, thus $v=x_1$ and $v'=x_2$.
Let $L=[\ell_{ij}]$ be a Latin square of order $k$ with $\ell_{ij}\in\{1,2,\ldots,r\}$,
$\ell_{11}=1$, and $\ell_{21}=k$. We define a $k+1$--total colouring of $H_k$ from $L$ as 
in the proof of Lemma~\ref{lem:all0}.

We may now multiply all colours by $n$ to get an $(n(k+1)+1,n)$--total colouring $\c_0$ of $H_k$.
Note that in such colouring each occurrence of the colour $0$ may be replaced by $n(k+1)=-1$
and the colouring remains proper.
So we may ``tweak'' the colouring $\c_0$ by recolouring $e_1$ by the colour $-1$. We then
shift all colours by $1$ to get the desired colouring~$\c$.
Hence $\c(e)=\c(e_1)=-1+1=0$, $\c(e')=\c(e_2)=n0+1=1$,
$\c(v)=\c(x_1)=n\ell_{11}+1=n+1$, and $\c(v')=\c(x_2)=n\ell_{21}+1=nk+1$.
\end{proof}

\newcommand{\G}[2]{G_{#1,#2}}

By way of Lemma~\ref{lem:all0}, we now use the graph $H_k$ to construct
type~$2$ graphs with maximum degree~$k$.
Let $n$ be a positive integer and $H'_k$ be the graph obtained from $H_k$ by deleting $k-2$
of its half-edges.
Let $B_1,\ldots,B_n$ be all isomorphic to $H'_k$,
and let $f_i$ and $f'_i$ be the half-edges of $B_i$.
Let $B_0$ be a graph with one vertex $u$ and two half-edges $f_0$ and $f'_{n+1}$.
We construct the graph $\G{k}{n}$ from the disjoint union of $B_0,B_1,\ldots,B_n$  by joining
the half-edges $f_i$ and $f'_{i+1}$ to form an edge $e_i$ for each $0\le i\le n$.

\begin{thm}
Let $k\ge 2$. For all positive integers $n$, the graph $\G{k}{n}$ is type~$2$.
Moreover, $k+1<\chi''_c(\G{k}{n})\le k+1+\frac{1}{n}$.
\label{thm:lim}
\end{thm}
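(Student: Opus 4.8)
The plan is to prove the upper bound and the type classification essentially independently, using Lemma~\ref{lem:tweak} above the line $k+1$ and Lemma~\ref{lem:all0} below it.

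For the upper bound $\chi''_c(\G{k}{n})\le k+1+\frac1n=\frac{n(k+1)+1}{n}$, I would exhibit an explicit $(n(k+1)+1,n)$--total colouring of $\G{k}{n}$ by gluing together colourings of the blocks $B_1,\dots,B_n$ supplied by Lemma~\ref{lem:tweak}. The idea is to colour each $B_i$ by the colouring $\gamma$ of Lemma~\ref{lem:tweak}, rotated (shifted cyclically in $\mathbb{Z}_{n(k+1)+1}$) by an amount $s_i$ chosen so that the two half-edges welded into a connecting edge $e_i$ receive the same colour from both sides. Because the two free half-edges of a block carry the consecutive colours $0$ and $1$, consistency at every weld forces $s_{i+1}=s_i+1$, so the connecting edges acquire the colours $0,1,\dots,n$ as one travels around the cyclic chain $B_0,B_1,\dots,B_n,B_0$. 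The single vertex $u$ of $B_0$ then meets the two edges $e_0$ and $e_n$, whose colours differ by exactly $n=q$; this is where the extra ``$+1$'' in the modulus is spent, and it is precisely what allows the colouring to close up.

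The delicate point, and the step I expect to be the main obstacle, is to verify that every newly created adjacency at a weld respects the circular constraint, i.e.\ has circular distance at least $n$. Within a block this is guaranteed by Lemma~\ref{lem:tweak}; the genuinely new constraints are (a) each connecting edge $e_i$ against its two endpoints, (b) the two endpoints of $e_i$ against each other, and (c) the edges $e_0,e_n$ meeting at $u$ together with $u$ itself. Constraint~(b) is the reason Lemma~\ref{lem:tweak} pins down the vertex colours $n+1$ and $nk+1$ rather than leaving them free: the colours of the two welded endpoints differ by a multiple of $n$ modified by the unit shift coming from $s_{i+1}=s_i+1$, and checking that this still has circular distance at least $q$ in $\mathbb{Z}_{n(k+1)+1}$ is immediate for $k\ge 3$ but requires a small separate adjustment in the boundary case $k=2$ (where $\G{2}{n}$ is just the cycle $C_{3n+1}$ and a uniform arithmetic colouring with all consecutive differences equal to $n$ does the job). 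Each of these is a short distance computation.

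For the lower part I would first upgrade Lemma~\ref{lem:all0} from $H_k$ to $H'_k$. Given any $(k+1)$--total colouring of $H'_k$, every vertex $x_i$ whose half-edge was deleted has degree $k-1$, so the colours on $x_i$ and its $k-1$ incident edges occupy exactly $k$ of the $k+1$ colours, leaving a single forced colour; recolouring each deleted half-edge with its forced colour extends the colouring to $H_k$. Lemma~\ref{lem:all0} then forces all half-edges of $H_k$, in particular the two surviving ones of $H'_k$, to share a colour. Applying this block by block to a hypothetical $(k+1)$--total colouring $c$ of $\G{k}{n}$ gives $c(e_{i-1})=c(e_i)$ for every $i$, hence $c(e_0)=c(e_1)=\dots=c(e_n)$. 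But $e_0$ and $e_n$ are adjacent at $u$, a contradiction; so $\G{k}{n}$ has no $(k+1)$--total colouring and is type~$2$. Finally, since a graph admits a $(k+1)$--total colouring if and only if $\chi''_c\le k+1$ (equivalently $\chi''=\lceil\chi''_c\rceil$), being type~$2$ yields $\chi''_c(\G{k}{n})>k+1$, which together with the construction above completes both bounds.
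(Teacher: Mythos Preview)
Your plan is essentially the paper's own proof: shift the block colouring of Lemma~\ref{lem:tweak} by one unit per block so that $c(e_i)=i$, then choose $c(u)$ compatibly; and for the lower bound, propagate $c(e_{i-1})=c(e_i)$ around the chain via Lemma~\ref{lem:all0} to force a conflict at~$u$.

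You are in fact more careful than the paper on two points. First, you make explicit the passage from $H'_k$ to $H_k$ needed before Lemma~\ref{lem:all0} applies; the paper invokes the lemma directly on the blocks, leaving that easy extension implicit. Second, your instinct about $k=2$ is well placed: with the paper's choice $c(u)=2n+1$ and $c(x_n)=nk+n\equiv -1$, the circular distance between the two endpoints of $e_n$ is only $n-1$ when $k=2$, so the generic weld check fails there. Handling $\G{2}{n}\cong C_{3n+1}$ separately by the standard arithmetic $(3n+1,n)$--total colouring, as you propose, patches this cleanly.
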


\begin{proof}
\def\c{c}
Let $k\ge 2$ and $n\ge 1$. 
Let $B_i, f_i, f'_i, e_i$ ($0\le i\le n$) be as in the definition of $\G{k}{n}$. Moreover, let
$x_i$ and $x'_i$ be the unique end vertices of $f_i$ and $f'_i$ respectively. In particular
$x_0=x'_0=u$. Suppose $c$ is a $k+1$--total colouring of $\G{k}{n}$. Then by Lemma~\ref{lem:all0}
for each $1\le i\le n$ we have $c(e_{i-1})=c(e_i)$. Thus $c(e_0)=c(e_n)$ which is a contradiction
since $e_0$ and $e_n$ are adjacent at~$u$. Therefore $\G{k}{n}$ is type~$2$, which means
$\chi''_c(\G{k}{n})>k+1$.

To prove $\chi''_c(\G{k}{n})\le k+1+\frac{1}{n}$,
we give an $(n(k+1)+1,n)$--total colouring of $\G{k}{n}$.
By Lemma~\ref{lem:tweak}, $B_1$ admits an $(n(k+1)+1,n)$--colouring $\gamma$ with
$\gamma(f_1)=1$ and $\gamma(f'_1)=0$. We use $\gamma$ to define a (partial)
$(n(k+1)+1,n)$--total colouring of $\G{k}{n}$ with $\c(e_0)=0$, $\c(e_1)=1$, $\c(x_1)=nk+1$ and $\c(x'_1)=n+1$.
We extend $c$ to $B_i$ by shifting the colouring of $B_1$ by $i-1$. Thus we have $\c(e_i)=i$ for all $0\le i\le n$, and
    $\c(x_i)=nk+i$ and
    $\c(x'_i)=n+i$
for all $1\le i\le n$. We define $\c(u)=2n+1$.

To show that $\c$ is a proper $(n(k+1)+1,n)$--total colouring of $\G{k}{n}$ we only
need to show that the new edges $e_i$ do not create any conflict.
Note that $\c(x_i)=nk+i=-n-1+i$ and $\c(x'_{i+1})=n+i+1$ are consistent so the edge $e_i$
is valid for $1\le i\le n-1$. For $e_0$ and $e_n$ we have $\c(x'_1)=n+1$, $\c(x_n)=nk+n=-1$ 
which are both consistent with $\c(u)=2n+1$.
Finally, $\c(e_0)=0$ and $\c(e_n)=n$ are consistent and
since $k\ge 2$, they are both consistent with $\c(u)=2n+1$.
\end{proof}

The following are immediate from Theorem~\ref{thm:lim}.

\begin{cor}
For all $k\ge2$,
$\displaystyle\lim_{n\to\infty}\chi''_c(\G{k}{n})=k+1$.
\label{cor:lim}
\end{cor}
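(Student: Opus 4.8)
The plan is to obtain the corollary directly from the two--sided estimate in Theorem~\ref{thm:lim} via the squeeze theorem. First I would fix $k\ge 2$ and recall that for every positive integer $n$ that theorem supplies the bound
\[
k+1<\chi''_c(\G{k}{n})\le k+1+\frac{1}{n}.
\]
Subtracting the constant $k+1$ from each term, this rearranges to $0<\chi''_c(\G{k}{n})-(k+1)\le 1/n$, which isolates the quantity whose limit we wish to control between two explicit bounding sequences.

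Next I would let $n\to\infty$. The lower bounding sequence is identically $0$, while the upper bounding sequence $1/n$ tends to $0$. Since the sequence $\chi''_c(\G{k}{n})-(k+1)$ is trapped between them for every $n$, the squeeze theorem forces it to converge to $0$, and hence $\chi''_c(\G{k}{n})\to k+1$ as claimed.

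There is no genuine obstacle here; the statement is an immediate consequence of Theorem~\ref{thm:lim}, and the only remark worth making is interpretive rather than technical. The strict inequality $k+1<\chi''_c(\G{k}{n})$ on the left is not needed for the limit itself---the weaker bound $k+1\le\chi''_c(\G{k}{n})$ would already suffice for the squeeze---but it is exactly what guarantees that the limit is approached strictly from above. This is the feature that makes $k+1$ a true accumulation point of $\S_k$ (realised by values strictly exceeding it) rather than a value merely attained, which is the point of interest flagged in the introduction.
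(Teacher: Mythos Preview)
Your argument is correct and is exactly what the paper intends: the corollary is stated as immediate from Theorem~\ref{thm:lim}, and the squeeze you wrote out is precisely that immediacy made explicit. Your closing remark about the strict lower bound ensuring $k+1$ is a genuine accumulation point is also accurate and matches the paper's intent.
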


\begin{cor}
For every $k\ge2$, the set $\S_k$ is infinite.
\label{cor:inftly}
\end{cor}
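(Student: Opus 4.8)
The plan is to read off infinitely many distinct elements of $\S_k$ directly from the family $\{\G{k}{n}\}_{n\ge1}$ produced in Theorem~\ref{thm:lim}, using the fact that their circular total chromatic numbers accumulate at $k+1$ while staying strictly above it. The whole argument is essentially a packaging of Theorem~\ref{thm:lim} together with Corollary~\ref{cor:lim}, so the only real content is a degree check plus a short convergence argument.

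First I would confirm that each $\G{k}{n}$ is an admissible witness for $\S_k$, i.e.\ that $\Delta(\G{k}{n})=k$. Tracing the construction: in $H_k$ every vertex of either partite set has degree $k$ (each $x_i$ is joined to $y_2,\dots,y_k$ together with its half-edge $e_i$, and each $y_j$ is joined to all of $x_1,\dots,x_k$); passing to $H'_k$ by deleting half-edges only lowers some degrees, and later rejoining each retained half-edge into a genuine edge $e_i$ restores its end-vertex to degree $k$. The sole exceptional vertex is $u\in B_0$, whose two half-edges become the edges $e_0$ and $e_n$, giving it degree $2\le k$. Hence $\Delta(\G{k}{n})=k$ for every $n\ge1$, so $\chi''_c(\G{k}{n})\in\S_k$.

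Next I would set $a_n=\chi''_c(\G{k}{n})$. By Theorem~\ref{thm:lim} each term satisfies $k+1<a_n\le k+1+\tfrac1n$, and by Corollary~\ref{cor:lim} we have $a_n\to k+1$. Thus $(a_n)$ is a sequence of reals, all strictly greater than $k+1$, converging to $k+1$ from above. To finish I would argue that $\{a_n\st n\ge1\}$ is infinite by contradiction: if only finitely many distinct values occurred, let $a$ be the smallest; then $a>k+1$, so picking $n$ with $\tfrac1n<a-(k+1)$ forces $a_n\le k+1+\tfrac1n<a$, contradicting minimality. Hence infinitely many distinct values appear among the $a_n$, all lying in $\S_k$, and $\S_k$ is infinite. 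I expect the only genuine verification to be the degree bookkeeping in the first step; once the maximum degree is pinned to $k$, the limit in Corollary~\ref{cor:lim} does all the work and the distinctness is a one-line consequence of convergence from strictly above.
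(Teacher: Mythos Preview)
Your argument is correct and matches the paper's intended route: the paper simply declares the corollary ``immediate from Theorem~\ref{thm:lim}'' (having already noted that $\G{k}{n}$ has maximum degree~$k$), and you have just spelled out the details of that immediacy. The degree bookkeeping and the convergence-from-above distinctness argument are exactly the content the paper leaves implicit.
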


\section{An Improved Upper Bound}

Note that $\G{2}{n}$ is isomorphic to the cycle $C_{3n+1}$ which has circular total chromatic number
$3+\frac{1}{n}$ as proved in~\cite{HackmannKemnitz}. So the upper bound of Theorem~\ref{thm:lim} is tight
for $k=2$.
For $k\ge3$ on the other hand, this upper bound is not tight. For a better upper bound, we need a better tweaking
of a $k+1$--total colouring of~$H_k$ than the rather easy one done in Lemma~\ref{lem:tweak}.

\begin{lem}
\def\c{\gamma}
Let $k\ge 2$. Let $e$ and $e'$ be two half-edges of the graph $H_k$, and let $x$ and $x'$ be the
vertices of $H_k$ incident with $e$ and $e'$ respectively.
Then for every positive integer $q$, $H_k$ admits a $(q(k+1)+1,q)$--total colouring $\c$
such that $\c(e)=0$, $\c(e')=2$, $\c(x)=qk+1$, and $\c(x')=q+2$.
\label{lem:refine2n}
\end{lem}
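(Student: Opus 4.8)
The plan is to mimic the structure of the proof of Lemma~\ref{lem:tweak}, but to choose a more clever Latin square and a more careful sequence of recolourings so that the two half-edges receive colours $0$ and $2$ (differing by $2q$ in the scaled colouring) rather than $0$ and $1$. First I would set up the same notation: label the partite sets $\{x_1,\ldots,x_k\}$ and $\{y_2,\ldots,y_k\}$, take $e=e_1$ and $e'=e_2$, and start from a $k+1$--total colouring of $H_k$ coming from a Latin square $L=[\ell_{ij}]$ with prescribed first-column entries. I expect I will want $\ell_{11}=k$ and $\ell_{21}=1$, so that after multiplying all colours by $q$ the vertex $x_1$ carries $qk$ and $x_2$ carries $q$; the half-edges $e_1,e_2$ and all $y_j$ still carry the colour $0$.

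The key step is the tweaking. After scaling by $q$ I have a $(q(k+1)+1,q)$--total colouring $\gamma_0$ in which $0$ and $q(k+1)=-1$ are interchangeable wherever $0$ appears on an element with no incident/adjacent element coloured $-1$. I would recolour the two half-edges $e_1$ and $e_2$, currently both $0$, using this freedom, and then apply a global shift by a constant so that the four prescribed values come out exactly as $\gamma(e)=0$, $\gamma(e')=2$, $\gamma(x)=qk+1$, $\gamma(x')=q+2$. Concretely I anticipate shifting by $1$ and separately bumping $e'$ so that $e_1\mapsto0$ while $e_2\mapsto2$; since the surrounding colours on $x_1,x_2$ and the $y_j$'s are all multiples of $q$ and lie far from $0,1,2$ in the circular metric (because $k\ge2$ gives enough room), these small local recolourings remain proper. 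Verifying this is where the bookkeeping lives: I must check that after the shift the gap condition $q\le|\gamma(a)-\gamma(b)|\le q(k+1)+1-q$ holds across every incidence involving $e_1,e_2,x_1,x_2$, reading differences modulo $q(k+1)+1$.

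The main obstacle will be confirming that assigning $e'$ the colour $2$ (rather than the more natural $1$ of Lemma~\ref{lem:tweak}) does not clash with the colour on $x'=x_2$ or on the edges incident with $x_2$. In the earlier lemma the half-edge value $1$ sat comfortably next to $x'=nk+1$; here I am asking for the larger offset $2$ against $x'=q+2$, and I must ensure $|2-(q+2)|=q$ meets the lower bound exactly and that $2$ avoids every edge colour at $x_2$, which are the scaled Latin-square entries $q\ell_{2j}$. This is the delicate point, and it is precisely the extra separation between the two half-edge colours that will later permit the improved upper bound; I expect it to go through for all $k\ge2$ provided the first column of $L$ is chosen as above, with the boundary case $k=2$ checked by hand.
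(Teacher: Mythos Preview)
Your plan has a genuine gap at exactly the point you flagged as delicate. After scaling by $q$ you have $\gamma_0(x_1)=qk$ and $\gamma_0(x_2)=q$, and you then propose to recolour only the two half-edges and apply a single global shift. But a global shift preserves the difference $\gamma_0(x_1)-\gamma_0(x_2)=q(k-1)$, whereas the target values demand $\gamma(x)-\gamma(x')=(qk+1)-(q+2)=q(k-1)-1$. No choice of shift constant reconciles these: shifting by $1$ gives $\gamma(x_2)=q+1$, not $q+2$, and then your assignment $\gamma(e')=2$ clashes with $x'$ since $|2-(q+1)|=q-1<q$. Shifting by $2$ fixes $x'$ but breaks $\gamma(x)=qk+1$. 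The computation $|2-(q+2)|=q$ you wrote down is correct, but your construction never actually places colour $q+2$ on $x'$.

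What the paper does differently is substantial: it does not merely tweak the half-edges. It takes the back-circulant Latin square $\ell_{ij}\equiv i+j-1\pmod k$, and at \emph{every} vertex $x_i$ with $i\le k-1$ it shifts by $+1$ all colours (vertex and incident edges) that are at least $\gamma_0(x_i)$. The back-circulant structure is chosen so that the induced shifts at each $y_j$ are again of the form ``shift all colours above a threshold by $+1$'', hence the colouring stays proper on both sides. This coordinated shift moves $x_1$ from $q$ to $q+1$ (and simultaneously moves all its incident edges, avoiding the $q-1$ conflict you would otherwise create), while leaving $x_k$ at $qk$; only then does one set $e_1\mapsto 1$, $e_k\mapsto -1$ and apply the final global $+1$. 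The missing idea in your proposal is this block of coordinated local shifts, together with the specific Latin square that makes them consistent at the $y_j$'s; recolouring half-edges alone cannot produce the required one-unit offset between the two vertex colours.
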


\begin{proof}
\def\c{\gamma_0}
\def\cc{\gamma}
Let $\{x_1,x_2,\ldots,x_k\}$ and $\{y_2,\ldots,y_{k}\}$ be the partite sets of~$H_k$,
and let $e_i$ be the half-edge in $H_k$ incident with $x_i$.
We may assume $e=e_k$ and $e'=e_1$, thus $v=x_k$ and $v'=x_1$.
Let $L=[\ell_{ij}]$ be the back-circulant Latin square of order~$k$, namely
$1\le\ell_{i,j}\le k$ and $\ell_{ij}=i+j-1\mod k$.

We start from the $k+1$--total colouring of
$H_k$ obtained from $L$ as in the proof of Lemma~\ref{lem:all0}, and multiply all colours by $q$
to obtain a $(q(k+1)+1,q)$--total colouring $\c$ of~$H_k$.
Then $\c(y_1)=\cdots=\c(y_{r-1})=\c(e_1)=\cdots=\c(e_k)=0$, $\c(x_1)=q$, and $\c(x_k)=qk$.

Note that the colours used on each vertex and its incident edges (and half-edges) are $0,q,2q,\ldots,kq$. Thus there
is a slack of $1$ between $kq$ and $0$. So we may shift all the colours $0,q,\ldots,sq$ by $-1$ ($0\le s\le k$), or
shift all the colours $tq,\ldots,kq$ by $1$ ($0\le t\le k$) and the colouring remains valid at that vertex.

We define $\cc$ from $\c$ by shifting by $1$ at each vertex $x_i$ with $i\le k-1$,
all the colours greater than or equal to $\c(x_i)$.
Because of our choice of $L$, we have $\cc(x_iy_j)=\c(x_iy_j)+1$ when
$i+j\le k+1$ and $\cc(x_iy_j)=\c(x_iy_j)$ otherwise. Effectively, at each vertex $y_j$ all the colours greater than or equal to
$\c(x_1y_j)$ are shifted by $1$. Therefore $\cc$ is a proper $(q(k+1)+1,q)$--total colouring at all vertices.

The shifts of colours performed above enable us now to define $\cc(e_1)=1$ and $\cc(e_k)=-1$.
We finish by shifting the colours of all vertices, edges and half-edges by~$1$.
An example is illustrated in Figure~\ref{fig:dist2}.
\end{proof}

\begin{figure}[hbt]
\begin{center}
\includegraphics[scale=1.0]{figtotal-3.mps}
\end{center}
\caption{A $(21,4)$--total colouring of $H_4$ found in the proof of Lemma~\ref{lem:refine2n}, before the final shift of all colours.
\def\c{\gamma}The left column shows the colours $\c(x_i)$, the right column $\c(e_i)$, the top row $\c(y_j)$, and the middle block
$\c(x_iy_j)$. Modifications from the original (scaled) Latin square colouring are shaded.}
\label{fig:dist2}
\end{figure}

\begin{thm}
For $k\ge4$ and $n\ge1$ we have $$\chi''_c(\G{k}{n})\le k+1+\frac{1}{2n}.$$
\label{thm:improve}
\end{thm}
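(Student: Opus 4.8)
The plan is to mimic the proof of Theorem~\ref{thm:lim}, but replace the crude tweaking of Lemma~\ref{lem:tweak} (which achieves a colour gap of $1$ between consecutive join edges $e_i$) by the sharper tweaking of Lemma~\ref{lem:refine2n} (which achieves a gap of $2$). The idea is that if I can make the colour of $e_i$ advance by $2$ per block instead of by $1$, then a chain of $n$ blocks can absorb a total ``drift'' of $2n$ rather than $n$, which should let me run the same periodicity-and-wraparound argument with modulus $q(k+1)+1$ where $q=2n$, yielding the ratio $(2n(k+1)+1)/(2n)=k+1+\frac{1}{2n}$. So first I would set $q=2n$ and apply Lemma~\ref{lem:refine2n} to each block $B_i$ (a copy of $H'_k$) to obtain a $(q(k+1)+1,q)$--total colouring $\gamma$ of $B_1$ with $\gamma(f_1)$, $\gamma(f'_1)$ two apart and the incident end vertices coloured as prescribed there.

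\emph{Next} I would stitch the blocks together exactly as in Theorem~\ref{thm:lim}: colour $B_1$ by $\gamma$, and colour $B_i$ by shifting $\gamma$ by $2(i-1)$, so that the join edges satisfy $c(e_i)=2i$ for $0\le i\le n$, while the two end vertices $x_i,x'_i$ of each block pick up the corresponding shifts of $qk+1$ and $q+2$. The endpoints $B_0$ carries the single vertex $u$; I would assign $c(u)$ a value chosen to sit a full arc-length of $q$ away from all four colours it must avoid, namely $c(x_n)$, $c(x'_1)$, $c(e_0)=0$, and $c(e_n)=2n$. The arithmetic here is parallel to the endgame of Theorem~\ref{thm:lim}, only with everything scaled: I would compute each of $c(x_i)$, $c(x'_{i+1})$ modulo $p=q(k+1)+1$, verify the circular distance is at least $q$ across every interior join $e_i$ ($1\le i\le n-1$), and then separately verify the two wrap edges at $u$ together with the requirement $|c(e_0)-c(e_n)|=2n=q$ being a legal gap. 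Since $k\ge4$ this last slack is comfortable, which is exactly where the hypothesis $k\ge4$ (rather than $k\ge2$) enters.

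\emph{The main obstacle} I anticipate is the wraparound consistency at the single vertex $u$: with the gap-$2$ scheme the two join edges $e_0$ and $e_n$ now differ by $2n$, and I must check that $2n$ is a valid circular distance in a $(2n(k+1)+1,2n)$--colouring, i.e.\ that $q\le 2n\le p-q$, and simultaneously that $u$ can be coloured to respect both incident edges and both incident end-vertices at once. This is a four-way circular-distance constraint on a single colour, and getting all four inequalities to hold forces the choice $k\ge4$; for $k=3$ the available arc is too short, which is why this theorem is stated only for $k\ge4$. I would resolve it by writing every relevant colour as a residue mod $p$ (using that $qk\equiv-q-1$ and $q(k+1)\equiv-1$), locating the largest gap on the circle among the four forbidden values, and placing $c(u)$ at its midpoint. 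Once that single colour is pinned down, the remaining checks are the same routine ``consistent'' verifications already carried out in Theorem~\ref{thm:lim}, so I would assert them briefly rather than expand them.
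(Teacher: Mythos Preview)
Your proposal is correct and follows essentially the same approach as the paper: apply Lemma~\ref{lem:refine2n} with $q=2n$ to $B_1$, shift by $2(i-1)$ for the remaining blocks so that $c(e_i)=2i$, and then verify the joins and the colour of $u$. The only cosmetic difference is that the paper simply sets $c(u)=6n$ explicitly rather than describing it as the midpoint of the largest gap, and checks directly that this is consistent with $c(e_0)=0$, $c(e_n)=2n$, $c(x'_1)=2kn+1$, $c(x_n)=4n$, using $k\ge4$ exactly where you anticipated.
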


\begin{proof}
\def\c{\gamma}
Let $k\ge4$ and $n\ge1$. 
Let $B_i, f_i, f'_i, e_i, x_i, x'_i$ ($0\le i\le n$) be as in the proof of Theorem~\ref{thm:lim}.
We give a $(2n(k+1)+1,2n)$--total colouring of $\G{k}{n}$ by combining shifts of
the colourings of Lemma~\ref{lem:refine2n}.
By Lemma~\ref{lem:refine2n} with $q=2n$, we have a $(2n(k+1)+1,2n)$--total colouring of
$B_1$ in $\G{k}{n}$ so that $c(e_0)=0$, $c(e_1)=2$, $c(x'_1)=2nk+1$ and $c(x_1)=2n+2$.
For the block $B_i$ where $2\le i\le n$, we use a shift (by $2i-2$) of the colouring of $B_1$.
Thus for $1\le i\le n$ we have
$c(e_i)=2i$ and
    $c(x_i)=2n+2i$ and
    $c(x'_i)=2nk+2i-1$.
Note that the end vertices of $e_i$ receive colours $c(x_i)=2n+2i$ and $c(x'_{i+1})=2nk+2i+1$
which are consistent. At the vertex $u$ we have $c(e_0)=0$ and $c(e_n)=2n$ which are consistent.
Moreover, since $c(x'_1)=2kn+1$ and $c(x_n)=4n$, we may define $c(u)=6n$. This does not conflict any of
the adjacent colours since $k\ge 4$.
\end{proof}

Combining $n-1$ shifts of an $(8n-3,2n-1)$--total colouring of $H'_3$
with $c(e)=0$, $c(e')=2$ (obtained from Lemma~\ref{lem:refine2n}) and an $(8n-3,2n-1)$--total colouring of
$H'_3$ with $c(e)=0$, $c(e')=1$, $c(x)=2n$, $c(x')=4n-1$ (obtained by tweaking the proof of Lemma~\ref{lem:tweak}),
one can prove $\chi''_c(\G{3}{n})\le4+{1}/({2n-1})$.
We omit details of such proof here.

By computer aid, we verified that the upper bound of Theorem~\ref{thm:improve} is tight
for $\G{4}{n}$ where $n=1,2,3,4,5$ and $\G{5}{n}$ where $n=1,2$.
We also verified by computer aid that $\chi''_c(\G{3}{1})=9/2$ and $\chi''_c(\G{3}{n})=4+{1}/({2n-1})$
for $n=2,3,\ldots,10$. These results suggest that the upper bounds of this section could indeed be the
actual circular total chromatic numbers of the graphs~$\G{k}{n}$.

\section{Concluding Remarks}

Similar to circular edge colouring, the circular
total chromatic numbers of graphs seem to be sparse. By analogy to Conjecture~\ref{conj:zhu}
and based on our computational experiments, we conjecture the following.

\begin{conj}
The set $\S_k$ has no accumulation points other than $k+1$.
\end{conj}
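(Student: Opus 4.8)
The plan is to treat this as the total-colouring analogue of Zhu's Conjecture~\ref{conj:zhu}: just as that conjecture asserts that only integers can be accumulation points of $\Se$, here one must show that $k+1$ is the sole accumulation point of $\S_k$. Corollary~\ref{cor:lim} already supplies one half (that $k+1$ \emph{is} an accumulation point), so the entire content is the negative statement that nothing else is. The first step is a localisation of the problem. Every graph $G$ with $\Delta(G)=k$ has a vertex of degree $k$ which, together with its $k$ incident edges, induces a $K_{k+1}$ in the total graph $T(G)$; hence $\chi''_c(G)=\chi_c(T(G))\ge\chi_c(K_{k+1})=k+1$. Combined with $\chi''_c(G)\le\chi''(G)$ and the Molloy--Reed bound $\chi''(G)\le k+C$ for an absolute constant $C$, this gives $\S_k\subseteq[k+1,k+C]$; more importantly, every \emph{type~$1$} graph of maximum degree $k$ satisfies $\chi''_c(G)=k+1$ exactly, since then $k+1\le\chi''_c(G)\le\chi''(G)=k+1$. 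Thus every member of $\S_k$ other than $k+1$ arises from a type~$2$ graph and lies in $(k+1,k+C]$, and the conjecture is equivalent to the assertion that for each $\delta>0$ only finitely many members of $\S_k$ exceed $k+1+\delta$.

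The main tool I would use is the tight-cycle description of the circular chromatic number. If $H$ is finite and $\chi_c(H)=p/q$ in lowest terms, the infimum is attained by a $(p,q)$-colouring whose tight digraph (arcs along colour-gaps of exactly $q$) contains a directed cycle; its length $\ell$ satisfies $p\mid\ell q$, so $p\mid\ell$ and $H$ contains a cycle of length at least $p$. Applying this to $H=T(G)$ shows that realising a ratio $p/q$ with large $p$ forces $T(G)$, and hence $G$, to be correspondingly large. What one really needs, however, is a \emph{locality} statement in the opposite direction: that if $\Delta(G)=k$ and $\chi''_c(G)\ge k+1+\delta$, then $G$ already contains a subgraph $F$ whose size is bounded in terms of $k$ and $\delta$ and which still has $\chi''_c(F)\ge k+1+\delta$ (this direction being legitimate because $F\subseteq G$ gives $T(F)\subseteq T(G)$ and hence $\chi''_c(F)\le\chi''_c(G)$). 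Since for a fixed bound on $|V(F)|$ there are only finitely many such $F$, and a finite family of finite graphs realises only finitely many circular total chromatic numbers, such a statement would immediately confine the values exceeding $k+1+\delta$ to a finite set and prove the conjecture.

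The hard part is precisely this locality statement, and it is where the problem becomes genuinely difficult: it is essentially the same obstruction that leaves Conjecture~\ref{conj:zhu} open. The circular total chromatic number is a global invariant, and our own graphs $\G{k}{n}$ show that values arbitrarily close to $k+1$ are produced by arbitrarily large, ``frustrated'' structures carrying no small certifying subgraph; one must rule out the analogous phenomenon at a target $\alpha\ne k+1$. Concretely, I would analyse the slack $s=p-(k+1)q$ that a tight colouring leaves in the forced $K_{k+1}$ of $T(G)$, and try to propagate the rigidity of that clique through the ``cliques-glued-along-edges'' structure of $T(G)$ so as to bound $q$ whenever $s/q$ is bounded away from $0$. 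As first, more tractable instalments I would aim to (i) establish an explicit gap just above $k+1$, showing that $\S_k\cap(k+1,k+1+\eta)$ is discrete for some $\eta=\eta(k)>0$, by exploiting the limited slack in the forced clique, and (ii) show that the upper endpoint $k+C$ (respectively $k+2$ under the TCC) is not an accumulation point by classifying the densest small obstructions in $T(G)$. Transferring the known partial results toward Conjecture~\ref{conj:zhu} for the circular chromatic index to the highly structured total graph $T(G)$ is the natural route to both.
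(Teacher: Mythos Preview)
This statement appears in the paper as an open \emph{conjecture}; the paper gives no proof, only motivation by analogy with Conjecture~\ref{conj:zhu} and computational evidence. There is therefore no proof in the paper against which to compare your attempt.

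Your writeup is, correspondingly, not a proof, and you are candid about this. The preliminary observations are sound: the clique $K_{k+1}$ inside $T(G)$ at any vertex of degree $k$ forces $\chi''_c(G)\ge k+1$; the Molloy--Reed bound confines $\S_k$ to a bounded interval; type~$1$ graphs contribute only the value $k+1$; and the tight-cycle argument correctly shows that realising $p/q$ in lowest terms requires $|V(T(G))|\ge p$. But, as you explicitly flag, the decisive step is the ``locality'' direction---that $\chi''_c(G)\ge k+1+\delta$ is already witnessed by a subgraph of size bounded in terms of $k$ and $\delta$---and this step is simply missing. Nothing in your outline (propagating clique rigidity through $T(G)$, bounding $q$ in terms of the slack $s=p-(k+1)q$) is worked out, and the graphs $\G{k}{n}$ themselves illustrate why such a bound cannot be naive: the obstruction to a $(k+1)$--total colouring is spread across the whole graph. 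Your two partial targets are sensible research directions, but as it stands the proposal is a programme, not a proof; the conjecture remains open, in the paper and in your attempt alike.
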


Note that the TCC implies that $\S_k\subseteq[k+1,k+2]$, thus if the TCC is true, then every
$a\in\S\setminus\mathbb{Z}$ can only be realized as the circular total chromatic number of
a graph with maximum degree~$\lfloor a\rfloor-1$.

It is worth mentioning here that the graph $\G{k}{n}$ is bipartite when $n$ is odd.
Hence Corollary~\ref{cor:lim} and Corollary~\ref{cor:inftly} remain valid for bipartite graphs.
A natural question to be asked here is whether every member of $\S$ can be realized as
the  circular total chromatic number
of some bipartite graph.

\def\soft#1{\leavevmode\setbox0=\hbox{h}\dimen7=\ht0\advance \dimen7
  by-1ex\relax\if t#1\relax\rlap{\raise.6\dimen7
  \hbox{\kern.3ex\char'47}}#1\relax\else\if T#1\relax
  \rlap{\raise.5\dimen7\hbox{\kern1.3ex\char'47}}#1\relax \else\if
  d#1\relax\rlap{\raise.5\dimen7\hbox{\kern.9ex \char'47}}#1\relax\else\if
  D#1\relax\rlap{\raise.5\dimen7 \hbox{\kern1.4ex\char'47}}#1\relax\else\if
  l#1\relax \rlap{\raise.5\dimen7\hbox{\kern.4ex\char'47}}#1\relax \else\if
  L#1\relax\rlap{\raise.5\dimen7\hbox{\kern.7ex
  \char'47}}#1\relax\else\message{accent \string\soft \space #1 not
  defined!}#1\relax\fi\fi\fi\fi\fi\fi}

\end{document}